\documentclass[a4paper]{article}
\usepackage{amsmath,amsthm,amssymb,mathtools}
\usepackage[margin=40mm]{geometry}
\title{Second-order cone representable slices of the positive semidefinite cone of size three}
\author{Gennadiy~Averkov\footnote{BTU Cottbus-Senftenberg, Platz der Deutschen Einheit 1, 03046 Cottbus, Germany}}

\newcommand{\cS}{\mathcal{S}}
\newcommand{\R}{\mathbb{R}}

\newcommand{\setcond}[2]{\left\{#1\,:\,#2\right\}}
\newcommand{\sprod}[2]{\left<#1,#2\right>}
\newcommand{\nsizesprod}[2]{\langle#1,#2\rangle}
\newcommand{\tr}{\operatorname{tr}}
\newcommand{\cQ}{\mathcal{Q}}
\newcommand{\cE}{\mathcal{E}}
\newcommand{\Diag}{\operatorname{Diag}}
\newcommand{\SP}{\operatorname{SP}}
\newcommand{\rank}{\operatorname{rank}}

\newtheorem{thm}{Theorem}

\newtheorem{cor}{Corollary}
\newtheorem{prop}{Proposition}
\newtheorem{prob}{Problem}
\begin{document}
\maketitle
\begin{abstract}
	To demonstrate the discrepancy between second-order cone and semidefinite programming, Hamza Fawzi showed that the cone $\cS_+^3$ of symmetric positive semidefinite matrices of size $3$ is not second-order cone representable (socr). A slice of $\cS_+^3$ is intersection of $\cS_+^3$ and a linear sub-space of the space $\cS^3$ of $3 \times 3$ symmetric matrices. It is known that some  slices of $\cS_+^3$ are socr, while some others are not. We classify socr slices of $\cS_+^3$ by showing that a slice of $\cS_+^3$ is socr if and if it has dimension at most $4$ or is orthogonal to a non-zero singular matrix (where the orthogonality is considered with respect to the standard trace scalar product).  
\end{abstract}
\section{Introduction}

 Second-order cone and semidefinite programming are two prominent examples of conic optimization paradigms beyond linear programming \cite{MR1857264}. The former is a special case of the latter. Glineur, Sounderson and Parrilo \cite{GSP:talk:2013} provided examples of semidefinite constraints of size $3$ that can be lifted to the second-order constraints. On the other hand, using the slack-matrix criterion from \cite{GPT:2013} and combinatorial arguments, Fawzi \cite{Fawzi:2018} showed that it is not possible to reduce semidefinite programming to second-order cone programming by means of lifting, since already the cone of symmetric positive semidefinite matrices of size $3$ does not admit a second-order cone lifting (see also \cite{averkov2019optimal} and \cite{saunderson2019limitations} for ramifications and generalizations of Fawzi's result in the context of polynomial optimization). The overall picture however is far from being complete, as one does not know in general what kind of semidefinite constraints are reducible to second-order cone constraints. In this note we characterize semidefinite constraints of size $3$ that can be lifted to second-order cone constraints. 
  
  For a vector space  $V$ over $\R$ and a convex cone $C \subseteq V$, we say that a set $S \subseteq C$ is a \emph{slice} of $C$ if $S$ is the intersection of $C$ and a linear subspace of $V$. We say that a set $S$ has a $C$-\emph{lift}  if $S$ is the image of a  slice of $C$ under a linear map. 
 
 Semidefinite optimization is conic optimization with respect to the cone $\cS_+^k$ of symmetric positive semidefinite ($=$ psd) matrices of size $k$ in the vector space $\cS^k$ of $k \times k$ symmetric matrices. We introduce the standard Euclidean structure in $\cS^k$ through the \emph{trace scalar product} $\sprod{A}{B} := \tr(AB)$. Each $B \in \cS^k$ determines the slice 
 \[
	 S_B := \setcond{A \in \cS_+^k}{\sprod{A}{B}=0}.
 \]
of $\cS_+^k$ orthogonal to $B$. 
 
Second-order cone optimization is conic optimization with respect to Cartesian powers
 \[
 	\cQ^m := \underbrace{Q \times \cdots \times \cQ}_{m}.
 \] 
 of the \emph{three-dimensional Lorentz cone}
 \[
 \cQ := \setcond{x \in \R^3}{x_3 \ge \sqrt{x_1^2 + x_2^2}}.
 \]
 
 $\cQ$ is linearly isomorphic to $\cS_+^2$ due to the equality
 \begin{equation} \label{Q:as:psd:2}
 	\cQ = \setcond{ x \in \R^3}{\begin{pmatrix*}[c] x_3 -x_1 & x_2 \\ x_2 & x_3 + x_1 \end{pmatrix*} \in \cS_+^3}.
 \end{equation}
Thus, $\cQ^m$ is linearly isomorphic to $(\cS_+^2)^m$ and by this also to the slice of $\cS_+^{2m}$ consisting of block-diagonal psd matrices with $m$ blocks of size $2$.
This shows that second-order cone optimization is a special case of semidefinite optimization.  

A closed convex cone $C$ is said to be \emph{second-order cone representable ($=$ socr)} if $C$ has a  $\cQ^m$-lift for some $m$.  Given a positive integer $k$, we are interested in the problem of characterization of all socr slices of $\cS_+^k$. Our main result is the complete solution in the smallest non-trivial case $k=3$:

 \begin{thm} \label{main} A slice $S$ of $\cS_+^3$ is socr if and only if $\dim(S) \le 4$ or $S=S_B$ for some $B \in \cS^3 \setminus \{0\}$ with $\det(B) =0$.
 \end{thm}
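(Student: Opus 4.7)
The proof splits into sufficient and necessary directions, unified by three tools: the congruence action $A \mapsto P^{\top} A P$ for $P \in GL_3(\R)$, which preserves $\cS_+^3$ and sends $S_B$ to $S_{P^{-1} B P^{-\top}}$ (so by Sylvester's law of inertia combined with the symmetry $S_B = S_{-B}$, I may freely replace $B$ by any of its diagonal $\{0,\pm 1\}$-representatives); the fact that every slice of a socr cone is itself socr (by pulling back a $\cQ^m$-lift); and a case analysis organized by Sylvester class.

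\textbf{Sufficient direction.} If $\dim S \le 4$, let $L = \mathrm{span}(S)$ so $\dim L^{\perp} \ge 2$. Restricting $\det$ to any two-dimensional subspace of $L^{\perp}$ gives a nonzero real homogeneous cubic in two variables, which has a nonzero projective real root by the intermediate value theorem; this yields a nonzero singular $B \in L^{\perp}$, so $S \subseteq S_B$ is a slice of $S_B$. It remains to show $S_B$ is socr for every nonzero singular $B$. Case by Sylvester class: (i) $\rank B = 1$, WLOG $B = e_1 e_1^{\top}$: psd-ness forces $A_{11} = A_{12} = A_{13} = 0$, so $S_B \cong \cS_+^2 \cong \cQ$ by (\ref{Q:as:psd:2}); (ii) $\rank B = 2$ definite, WLOG $B = \mathrm{diag}(1,1,0)$: psd-ness forces $A$ supported at $(3,3)$, giving a single ray; (iii) $\rank B = 2$ indefinite, WLOG $B = \mathrm{diag}(1,-1,0)$: parametrizing $A \in S_B$ as $\left(\begin{smallmatrix} a & b & c \\ b & a & d \\ c & d & e \end{smallmatrix}\right)$ and substituting $(p,q,u,v) = (a-b,\, a+b,\, (c+d)/\sqrt 2,\, (c-d)/\sqrt 2)$, one computes $\det A = pqe - pu^2 - qv^2$, and $A \succeq 0$ reduces to $p, q \ge 0$ together with $pqe \ge pu^2 + qv^2$. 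The latter admits the $\cQ \times \cQ$-lift: there exist auxiliary $e_1, e_2 \ge 0$ with $e \ge e_1 + e_2$, $qe_1 \ge u^2$, and $pe_2 \ge v^2$, each of the last two being an $\cS_+^2$-constraint via (\ref{Q:as:psd:2}).

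\textbf{Necessary direction.} If $\dim S = 6$, then $S = \cS_+^3$, not socr by Fawzi's theorem \cite{Fawzi:2018}. If $\dim S = 5$ and $S$ is not $S_B$ for any nonzero singular $B$, then $S = S_B$ with $\det B \ne 0$; the definite signatures $(3,0)$ and $(0,3)$ yield $S_B = \{0\}$, leaving only $(2,1)$ and $(1,2)$, and after $S_B = S_{-B}$ I may assume $B = \mathrm{diag}(1,1,-1)$. The core task is therefore: $S_{\mathrm{diag}(1,1,-1)}$ is not socr. My plan is to apply the slack-matrix criterion of Gouveia, Parrilo and Thomas \cite{GPT:2013}: extreme rays of $S_B$ are $vv^{\top}$ with $v$ on the null-circle $\{v_1^2 + v_2^2 = v_3^2\}$, and the dual-exposed faces are indexed by a similar circular family; a $\cQ^m$-lift would yield a factorization of the slack operator $(v,w) \mapsto (v^{\top} w)^2$ as a sum of $m$ rank-$2$ PSD ``Lorentz'' factors. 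Adapting Fawzi's topological obstruction (used to rule out such factorizations for $\cS_+^3$) to the pair of circular parametrizations then gives the desired non-representability.

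\textbf{Main obstacle.} The principal difficulty is precisely the non-socr claim for $S_{\mathrm{diag}(1,1,-1)}$. Fawzi's proof for $\cS_+^3$ exploits the full projective plane $\bP^2$ of rank-one extreme rays, whereas in the slice $S_B$ the extreme rays degenerate to a circle, so the combinatorial argument must be retuned to show that the lower-dimensional parametrization still forces infinite $\cQ^m$-rank of the slack operator. A tempting alternative — reducing directly to Fawzi by writing $\cS_+^3$ as a Minkowski sum of congruent signature-$(2,1)$ slices $S_{I - 2e_i e_i^{\top}}$ — fails (the rank-one matrix $e_1 e_1^{\top}$ is not in this sum, as a short check on diagonal decompositions shows), so any reduction of this type would demand a more elaborate lift; the cleanest route therefore seems to be the direct slack-matrix analysis indicated above.
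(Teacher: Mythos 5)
Your sufficiency argument is correct and in places more self-contained than the paper's. Where the paper separates the low-dimensional case into ``no interior points'' (a face, via the facial structure of $\cS_+^3$) and ``interior points'' (finding a singular indefinite matrix in $L^\perp$ by connecting $C$ to $-C$ and applying the intermediate value theorem to the middle eigenvalue), you find a nonzero singular $B\in L^\perp$ in one stroke from the fact that a homogeneous cubic on a $2$-plane has a nontrivial real zero (note only that the cubic may vanish identically, in which case every nonzero element of the plane is singular -- the conclusion survives), and you then verify directly that $S_B$ is socr for \emph{every} nonzero singular $B$ by Sylvester class. Your explicit $\cQ^2$-lift for $B=\Diag(1,-1,0)$ is a correct reconstruction of the Glineur--Saunderson--Parrilo construction that the paper simply quotes as \eqref{eq:spec:slice}. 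One small imprecision: the set $\setcond{(p,q,u,v,e)}{p,q\ge 0,\ pqe\ge pu^2+qv^2}$ is strictly larger than $S_B$ (take $p=q=u=0$, $v\ne 0$), so $A\succeq 0$ does not ``reduce to'' those inequalities; however, the lifted description with auxiliary $e_1,e_2$ does project exactly onto $S_B$, so the lift itself is fine -- just state the equivalence with the lifted set directly rather than through the scalar inequality.

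The genuine gap is in the necessary direction: you never prove that $S_{\Diag(1,1,-1)}$ is not socr. You correctly identify this as the crux, reduce to it cleanly, and sketch a plan (slack-operator factorization over the circle of rank-one extreme rays, adapting Fawzi's topological/combinatorial obstruction), but the adaptation is exactly the hard technical content and is left entirely unexecuted; as you yourself observe, Fawzi's argument for $\cS_+^3$ uses the full projective plane of extreme rays, and it is not obvious that the obstruction persists when the extreme rays degenerate to a conic. The paper does not reprove this either -- it closes the gap by citing the fact that $S_2=\setcond{(a_{ij})\in\cS_+^3}{a_{11}=a_{22}+a_{33}}=S_{\Diag(1,-1,-1)}$, which is congruent to your $S_{\Diag(1,1,-1)}$ up to sign and permutation, was shown not to be socr in the arXiv version of Fawzi's paper \cite{Fawzi:arxiv:2016}. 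So either cite that result explicitly (which turns your outline into a complete proof along essentially the paper's lines) or actually carry out the slack-matrix argument; as written, the ``only if'' half of the theorem is not established.
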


Slices of dimension at most $2$ are not very interesting. They are just polyhedral cones positively spanned by at most two vectors. Rather surprisingly, Theorem~\ref{main} says that \emph{all} slices of $\cS_+^3$ of dimensions $3$ and $4$ are socr. The theorem also allows to give an explicit description of $5$-dimensional socr slices: 

\begin{cor} \label{dim5}
	$\setcond{S_B}{B \in \cS^3 \ \text{singular and indefinite}}$ is the set of all socr slices of $\cS_+^3$ of dimension $5$. 
\end{cor}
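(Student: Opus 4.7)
The plan is to read off the corollary from Theorem~\ref{main} after one intermediate observation about dimensions. Since $\dim \cS^3 = 6$, every $5$-dimensional linear subspace of $\cS^3$ is of the form $B^\perp$ for some $B \in \cS^3 \setminus \{0\}$ (unique up to a scalar), so every $5$-dimensional slice of $\cS_+^3$ is automatically of the form $S_B$. Theorem~\ref{main} then tells us that such an $S_B$ is socr if and only if $B$ can be chosen to be singular, so the socr candidates among $5$-dimensional slices are exactly the $S_B$ with $B$ nonzero and singular.

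The remaining task is to determine, among nonzero singular $B$, for which $B$ the slice $S_B$ actually has dimension $5$ rather than strictly less. Because $S_B \subseteq B^\perp$ with $\dim B^\perp = 5$, this is equivalent to $S_B$ having nonempty relative interior in $B^\perp$, hence to the existence of a positive definite $A$ with $\sprod{A}{B}=0$.

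To analyze this, I would diagonalize $B = P\Diag(\lambda_1,\lambda_2,\lambda_3)P^T$ with $P$ orthogonal and search for positive definite $A$ of the form $P\Diag(a_1,a_2,a_3)P^T$ with $a_i > 0$; the constraint reduces to the single linear equation $\lambda_1 a_1 + \lambda_2 a_2 + \lambda_3 a_3 = 0$, which has a strictly positive solution exactly when the $\lambda_i$ do not all share the same (weak) sign, i.e., precisely when $B$ is indefinite. Conversely, if $\pm B \succeq 0$, then $A \succeq 0$ together with $\sprod{A}{B}=0$ forces $AB = 0$, so $A$ is supported on $\ker B$ and $\dim S_B \le 3$, ruling such $B$ out of the $5$-dimensional regime.

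The main obstacle, modest as it is, is cleanly handling the borderline case $\pm B \succeq 0$ singular and nonzero, and verifying there that the slice collapses to dimension at most $3$. Beyond that, the argument is essentially linear-algebraic packaging of Theorem~\ref{main}, and combining the two directions identifies the $5$-dimensional socr slices as exactly the $S_B$ with $B$ singular and indefinite, as claimed.
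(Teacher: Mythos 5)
Your proposal is correct and follows essentially the same route as the paper: the paper deduces the corollary directly from Theorem~\ref{main} together with Proposition~\ref{obs}, which states precisely that the $5$-dimensional (codimension-$1$) slices are exactly the $S_B$ with $B$ indefinite. The only difference is that you re-derive that proposition inline (via diagonalization and the existence of a positive definite $A$ orthogonal to $B$, plus the $AB=0$ argument for semidefinite $B$) instead of citing it, which is a harmless variation.
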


Corollary~\ref{dim5} yields a semi-algebraic description of $5$-dimensional socr slices of $\cS_+^3$ via the equivalence:
\begin{align}
	& B  \in \cS^3 \ \text{singular and indefinite} \ \Leftrightarrow \nonumber 
	\\   & \det(B)  = 0 \ \text{and} \ \det(B_{\{1,2\}})+ \det(B_{\{1,3\}}) + \det(B_{\{2,3\}}) > 0. \label{det:descr}
\end{align}
Here, $\det(B_I)$ denotes the principal minor indexed by $I$. 

Examples of  $5$-dimensional slices $S_B$ that have previously been considered are listed in Table~\ref{ex}. 

\begin{table}[htb]
\begin{center}
\begin{tabular}{lll}
	\hline 
	\textbf{$5$-dimensional slices of $\cS_+^3$}  & \textbf{socr} & \textbf{source}
	\\ 
	\hline 
	$S_1:=\setcond{(a_{ij}) \in \cS_+^3}{a_{11}=a_{22}}$  
	& yes & \cite{GSP:talk:2013}
	\\
	$S_2:=\setcond{(a_{ij}) \in \cS_+^3}{a_{11}=a_{22}+a_{33}}$ 
	& no & \cite{Fawzi:arxiv:2016}
	\\ 
	$S_3:=\setcond{(a_{ij}) \in \cS_+^3}{a_{22}=a_{13}}$
	& no & \cite{Fawzi:2018}
	\\
	\hline
\end{tabular}
\end{center}
\caption{\label{ex} Examples of $5$-dimensional slices of $\cS_+^3$. Note that, while \cite{Fawzi:arxiv:2016} is the Arxiv version of \cite{Fawzi:2018}, $S_2$ is considered only in \cite{Fawzi:arxiv:2016} and $S_3$ is considered only in \cite{Fawzi:2018}. Example $S_1$ from the unpublished source \cite{GSP:talk:2013} is also explained in \cite{Fawzi:arxiv:2016,Fawzi:2018}.}
\end{table}  

Regarding $S_1$ from Table~\ref{ex}, we mention that the argument of Glineur, Sounderson and Parrilo \cite{GSP:talk:2013} shows that $S_1$ is the linear image of $\cQ^2$. \cite{GSP:talk:2013} contains a verification of the equivalence: 
\begin{align}
	\begin{pmatrix}
		t & a & b 
		\\ a & t & c 
		\\ b & c & s
	\end{pmatrix} \in \cS_+^3 & &\Leftrightarrow & & \exists u \in \R : \ (b+c,t+a,u, b-c,t-a, 2s -u) \in \cQ^2, \label{GSP:eq}
\end{align}
This can also be described as the equality
\begin{align}
	\setcond{(a_{ij}) \in \cS_+^3}{a_{11}=a_{22}} = \setcond{ \begin{pmatrix} x_2 + y_2 & x_2 - y_2 & x_1 + y_1 
		\\ x_2 -y_2 & x_2 + y_2 & x_1 -y_1 
		\\ x_1 + y_1 & x_1-y_1 & x_3 + y_3 \end{pmatrix}  }{(x,y) \in \cQ^2}. \label{eq:spec:slice} 
\end{align}

As a byproduct of the proof of Theorem~\ref{main} and  \eqref{eq:spec:slice}, we are able to determine an $m$, for which all socr slices of $\cS_+^3$ have a $\cQ^m$-lift.
\begin{cor} \label{Q2} {\ }
	\begin{itemize}
		\item[(a)] Every slice of $\cS_+^3$ of dimension at most $4$ admits a $\cQ^2$-lift.
		\item[(b)] If a $5$-dimensional slice $S$ of $\cS_+^3$ is socr, then $S$ is the image of $\cQ^2$ under a linear map.
	\end{itemize}
\end{cor}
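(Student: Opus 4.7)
The plan is to prove (b) first, since it supplies the key case needed in (a). For (b), Corollary~\ref{dim5} reduces the problem to slices of the form $S_B$ with $B \in \cS^3$ singular and indefinite. By Sylvester's law of inertia, every such $B$ can be written as $B = P^{T}\Diag(1,-1,0)\,P$ for some invertible $P$. The linear map $A \mapsto P^{-1}AP^{-T}$ is then a congruence of $\cS^3$ that preserves $\cS_+^3$ and maps $S_1 = S_{\Diag(1,-1,0)}$ onto $S_B$, since $\tr\bigl((P^{-1}AP^{-T})B\bigr)=\tr\bigl(A\Diag(1,-1,0)\bigr)$. Composing this congruence with the explicit linear surjection $\cQ^2 \to S_1$ recorded in~\eqref{eq:spec:slice} exhibits $S_B$ as a linear image of $\cQ^2$, proving~(b).

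For (a), let $S = \cS_+^3 \cap L$ with $\dim L \le 4$, so that $\dim L^{\perp} \ge 2$. The first step is to produce a non-zero singular $B \in L^{\perp}$. Pick any two-dimensional subspace $M \subseteq L^{\perp}$ with basis $B_1, B_2$ and observe that $p(s,t) := \det(sB_1 + tB_2)$ is a homogeneous cubic in two real variables; invoking the standard fact that every such cubic has a non-trivial real zero yields $(s,t)\neq 0$ with $B := sB_1 + tB_2 \in L^{\perp}\setminus\{0\}$ and $\det B = 0$. Since $S \subseteq L \subseteq B^{\perp}$, we obtain $S = S_B \cap L$. The second step is to verify that $S_B$ admits a $\cQ^2$-lift for every singular $B \neq 0$, split by $\rank B$. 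If $\rank B = 1$, then $B = \pm vv^{T}$ and $S_B = \{A \in \cS_+^3 : Av = 0\}$ is a face of $\cS_+^3$ linearly isomorphic to $\cS_+^2$, hence to $\cQ$ via~\eqref{Q:as:psd:2}; since $\cQ$ appears as the face $\cQ \times \{0\}$ (and therefore as a slice) of $\cQ^2$, this gives a $\cQ^2$-lift. If $\rank B = 2$ and $B$ is semidefinite, then $S_B$ is a single ray and trivially admits a $\cQ^2$-lift. If $\rank B = 2$ and $B$ is indefinite, part~(b) applies. In every case we obtain $S_B = \pi(\cQ^2 \cap L')$ for some linear $\pi$ and subspace $L' \subseteq \R^6$, and then
\[
   S \;=\; S_B \cap L \;=\; \pi\bigl(\cQ^2 \cap L' \cap \pi^{-1}(L)\bigr)
\]
presents $S$ as the image of a slice of $\cQ^2$ under $\pi$.

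The most delicate step I anticipate is the production of the singular $B$ in (a); it rests on the elementary fact that every real homogeneous polynomial of odd degree in two variables has a non-trivial real zero, either by dehomogenizing and invoking the real-root theorem for cubics, or by applying the intermediate value theorem to the odd function $p$ restricted to the unit circle. One should also separately handle the corner case where $p$ vanishes identically on $M$, in which case every non-zero element of $M$ is singular. Beyond that, the argument is a routine assembly of Sylvester's law of inertia, the explicit identity~\eqref{eq:spec:slice}, and the set-theoretic equality $\pi(X)\cap L = \pi\bigl(X\cap \pi^{-1}(L)\bigr)$ needed to transport the $\cQ^2$-lift of $S_B$ through the further intersection with $L$.
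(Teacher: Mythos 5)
Your proposal is correct, and part (b) follows essentially the paper's route (Corollary~\ref{dim5}, then a congruence reducing $B$ to $\Diag(1,-1,0)$, then \eqref{eq:spec:slice}); the only difference there is that you carry out the Sylvester normalization explicitly instead of citing the isomorphism $S_B \cong S_D$ already established inside the proof of Theorem~\ref{main}. Part (a), however, takes a genuinely different path. The paper reuses the dichotomy from Case~3 of the proof of Theorem~\ref{main}: either $S$ misses the interior of $\cS_+^3$ and is a slice of a proper face (handled by Proposition~\ref{faces}), or $S$ meets the interior, in which case every non-zero $B \in L^\perp$ is indefinite and a \emph{singular} such $B$ is found by connecting $C$ to $-C$ inside $L^\perp \setminus \{0\}$ and applying the intermediate value theorem to the middle eigenvalue $\lambda_2$. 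You instead produce a non-zero singular $B \in L^\perp$ directly, by restricting $\det$ to a two-dimensional pencil in $L^\perp$ and using that a real homogeneous cubic in two variables has a non-trivial real zero; the price is that you must then split on $\rank(B)$ and on whether $B$ is semidefinite, since you have not arranged for $B$ to be indefinite in advance. Both arguments are sound. Yours is more self-contained and more algebraic (no separation argument, no eigenvalue continuity), and it localizes all the face-versus-interior bookkeeping into the rank case analysis at the end; the paper's version is shorter in context only because the topological work was already done in proving Theorem~\ref{main}. Your final set-theoretic step $\pi(X) \cap L = \pi\left(X \cap \pi^{-1}(L)\right)$, needed to turn a $\cQ^2$-lift of $S_B$ into one of $S = S_B \cap L$, is the same device the paper uses implicitly when it passes from ``$S_B$ is socr'' to ``$S$ is socr''; making it explicit is a small but welcome improvement in rigor.
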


We also shortly discuss affine slices of $\cS_+^k$. We define an \emph{affine slice} of a convex cone $C \subseteq V$ in a vector space $V$ as the intersection of $C$ with an affine subspace of $V$. When dealing with affine slices of $\cS_+^k$, one often uses the notions of linear matrix inequality ($=$ LMI) and spectrahedron. For a linear map $A : \R^n \to \cS^k$ and $B \in \cS^k$, the constraint $A(x) + B \in \cS_+^k$ is called an LMI of size $k$. The respective set 
\[
	\SP_{A,B}:=\setcond{x \in \R^n}{A(x) +B \in \cS_+^k}
\] 
given by this LMI is called a \emph{spectrahedron} \cite{blekherman2012semidefinite}. Modulo the lineality space (the vector space consisting of all vectors parallel $u$ that are parallel to a line contained in $\SP_{A,B}$) every spectrahedron $\SP_{A,B}$ is isomorphic to an affine slice of $\cS_+^k$. As a consequence of Theorem~\ref{main}, we obtain: 

\begin{cor} \label{spec3}
	Every spectrahedron defined by an LMI of size $3$ and of dimension at most $3$ is affinely socr. 
\end{cor}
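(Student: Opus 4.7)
The plan is to reduce the corollary to Theorem~\ref{main} via a one-dimensional homogenization of the affine part. Using the reduction recalled immediately before the corollary, modulo its lineality space every spectrahedron defined by an LMI of size $3$ is affinely isomorphic to an affine slice of $\cS_+^3$ of the same dimension, so in particular of dimension at most $3$. Since any linear subspace of $\R^n$ is itself the image of some $\cQ^d$ under a linear projection, the lineality factor can be reintroduced without destroying affine SOC-representability. It therefore suffices to prove that every affine slice $S' = \cS_+^3 \cap (B + L)$ of $\cS_+^3$ with $\dim S' \le 3$ is the image, under a linear map, of an affine slice of some $\cQ^m$.

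For this, I would first shrink $L$ so that it equals the direction of the affine hull of $S'$, whence $\dim L = \dim S' \le 3$, and then pass to the linear subspace $\tilde L := L + \R B$ of $\cS^3$. Since $\tilde L$ is the linear span of the affine subspace $B + L$, we have $\dim \tilde L \le \dim L + 1 \le 4$, and so the linear slice
\[
    \tilde S := \cS_+^3 \cap \tilde L
\]
of $\cS_+^3$ has dimension at most $4$. Theorem~\ref{main} then guarantees that $\tilde S$ is socr and, by Corollary~\ref{Q2}(a), even admits a $\cQ^2$-lift; I would fix any such lift $\tilde S = \pi(T)$, with $T$ a linear slice of $\cQ^m$ and $\pi$ a linear map.

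Finally, because $B + L \subseteq \tilde L$, I would observe
\[
    S' \;=\; \tilde S \cap (B + L) \;=\; \pi\bigl(T \cap \pi^{-1}(B + L)\bigr),
\]
where $T \cap \pi^{-1}(B + L)$ is an affine slice of $\cQ^m$ as the intersection of the linear slice $T$ with an affine subspace of the ambient coordinate space. This exhibits $S'$ as a linear image of an affine slice of $\cQ^m$, i.e., $S'$ is affinely socr, completing the reduction. The only non-bookkeeping ingredient is spotting the dimension count $\dim \tilde L \le 4$; this ``$+1$'' is precisely what relates the affine bound ``dimension at most $3$'' in the corollary to the linear bound ``dimension at most $4$'' in Theorem~\ref{main}, and is the only place where any genuine input beyond formal manipulation enters.
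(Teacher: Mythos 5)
Your proof is correct and follows essentially the same route as the paper: both homogenize the affine data to obtain a linear slice of $\cS_+^3$ of dimension at most $4$, invoke Theorem~\ref{main}, and then dehomogenize by intersecting with an affine subspace, the ``$+1$'' in the dimension count being the only substantive step in either version. The sole difference is cosmetic: the paper homogenizes on the domain side, forming the spectrahedral cone $\setcond{(x,y)}{A(x)+yB \in \cS_+^3}$ in $\R^{n+1}$ and passing to the corresponding slice, whereas you homogenize on the codomain side by spanning $L + \R B$ inside $\cS^3$; these produce the same slice of $\cS_+^3$.
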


Arguably, the most well known spectrahedron is the three-dimensional \emph{eliptope} \cite{laurent1995positive}
\begin{align*}
\cE_3:=
 \setcond{x \in \R^3}{
\begin{pmatrix}
1 & x_1 & x_2
\\ x_1 & 1 & x_3
\\ x_2 & x_3 & 1
\end{pmatrix} \in \cS_+^3}.
\end{align*}
$\cE_ 3$ is affinely socr. In view of Corollary~\ref{spec3}, $\cE_3$ owes this property solely to its dimension and the size of the respective LMI. 

In \cite{averkov2019optimal} and \cite{saunderson2019limitations}, the authors studied if, for a given $m$, certain cones arising in polynomial optimization have a $(\cS_+^m)^n$-lift for some integer $n$. One can formulate the same problem for slices of $\cS_+^k$:
\begin{prob}
For given integers $0 < m < k$, characterize slices of $\cS_+^k$ that have a $(\cS_+^m)^n$-lift for some integer $n$. 
\end{prob}

In view of \eqref{Q:as:psd:2}, Theorem~\ref{main} settles the case $m=1,k=2$ of this problem. All other cases seem to be open. 

\section{Proofs}

Throughout, $k,m$ and $n$ denote positive integers. $\Diag(a_1,\ldots,a_k)$ stands for the $k \times k$ diagonal matrix with the diagonal entries $a_1,\ldots,a_k$ in this order. The transpose of a matrix $A$ is denoted by $A^\top$. In matrix expressions, elements of $\R^n$ are interpreted as columns.

We first formulate basic propositions on the geometry of the semidefinite cone $\cS_+^k$.  

\begin{prop}[see \cite{barvinok2002course}] \label{faces}
	 $\setcond{S_B}{B \in \cS_+^k}$ is the set of all faces of $\cS_+ ^k$. If $B \in \cS_+^k$ has rank $r$, the face $S_B$ is linearly isomorphic to $\cS_+^{k-r}$ and has dimension $(k-r)(k-r+1)/2$.  
\end{prop}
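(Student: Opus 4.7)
First I would establish the trace inequality: for $A,B \in \cS_+^k$, one has $\sprod{A}{B} \ge 0$, with equality if and only if $AB = 0$, equivalently $\operatorname{range}(A) \perp \operatorname{range}(B)$. Writing $B = C^\top C$, the quantity $\tr(AB) = \tr(CAC^\top)$ is the trace of a psd matrix, hence nonnegative, and vanishes if and only if $CAC^\top = 0$, i.e., $AB = 0$. Consequently $\sprod{\cdot}{B}$ is a supporting linear functional of $\cS_+^k$ whose zero locus meets $\cS_+^k$ precisely in $S_B$; this exhibits $S_B$ as an exposed face.

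Next I would diagonalize $B$ by an orthogonal $U$, writing $U^\top B U = \Diag(\lambda_1,\ldots,\lambda_r,0,\ldots,0)$ with $\lambda_1,\ldots,\lambda_r > 0$. The congruence $A \mapsto U^\top A U$ is a linear automorphism of $\cS^k$ that preserves both $\cS_+^k$ and the trace pairing, so $S_B$ is linearly isomorphic to $S_{U^\top B U}$. For $\tilde A \in \cS_+^k$, the equation $\sprod{\tilde A}{U^\top B U} = \sum_{i=1}^r \lambda_i \tilde A_{ii} = 0$ combined with nonnegativity of diagonal entries of psd matrices forces $\tilde A_{ii} = 0$ for $i \le r$; positive semidefiniteness then forces the first $r$ rows and columns of $\tilde A$ to vanish entirely. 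What remains is an arbitrary element of $\cS_+^{k-r}$ in the lower-right block, yielding the linear isomorphism $S_B \cong \cS_+^{k-r}$ and the dimension count $(k-r)(k-r+1)/2$.

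Finally, I would show that every face $F$ of $\cS_+^k$ has the form $S_B$. The key observation is that matrices in the relative interior of $F$ share a common range $R \subseteq \R^k$: if $A_1,A_2 \in \operatorname{ri}(F)$, then $A_1 \pm \varepsilon A_2 \in F \subseteq \cS_+^k$ for small $\varepsilon > 0$, which forces $\ker A_1 \subseteq \ker A_2$, and by symmetry equality. One then checks that $F$ coincides with $\setcond{A \in \cS_+^k}{\operatorname{range}(A) \subseteq R}$, and the equivalence from the first paragraph identifies this set as $S_B$ for any $B \in \cS_+^k$ with $\operatorname{range}(B) = R^\perp$. The only genuinely non-routine step is this range characterization of faces; the rest is straightforward congruence-diagonalization.
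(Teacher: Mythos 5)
The paper does not actually prove this proposition; it is stated as a known fact with only a citation to Barvinok's book, so there is no in-paper argument to compare yours against. Your proof is a correct and essentially standard self-contained derivation. The three ingredients all check out: the trace inequality $\sprod{A}{B}\ge 0$ with equality iff $AB=0$ (equivalently $\operatorname{range}(A)\perp\operatorname{range}(B)$) exhibits each $S_B$ as an exposed face; congruence by the orthogonal matrix diagonalizing $B$, combined with the fact that a psd matrix with a vanishing diagonal entry has the entire corresponding row and column equal to zero, gives the linear isomorphism $S_B\cong\cS_+^{k-r}$ and the dimension count; and the common-range argument for relative interior points reduces every nonempty face to the form $\setcond{A\in\cS_+^k}{\operatorname{range}(A)\subseteq R}=S_B$ for any $B\in\cS_+^k$ with $\operatorname{range}(B)=R^\perp$. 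The one step you defer with \emph{one then checks} deserves a line in each direction to be complete: for $F\subseteq G$, take $A_1\in\operatorname{ri}(F)$ and $A\in F$; then $A_1-\varepsilon A\in\cS_+^k$ for small $\varepsilon>0$ forces $\ker A_1\subseteq\ker A$, i.e.\ $\operatorname{range}(A)\subseteq R$; for $G\subseteq F$, take $A$ with $\operatorname{range}(A)\subseteq R$, note that $A_1-\varepsilon A\succeq 0$ for small $\varepsilon$ because $A_1$ is positive definite on $R$, and write $A_1=(A_1-\varepsilon A)+\varepsilon A$ as a sum of two psd matrices, so the face property of $F$ puts $\varepsilon A$, hence $A$, in $F$. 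With that filled in, your argument is a complete proof of the cited result.
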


\begin{prop} \label{obs}
	For a slice $S$ of $\cS_+^k$ with $k \ge 2$, the following conditions are equivalent:
	\begin{itemize}
		\item[(i)] $S$ has co-dimension $1$.
		\item[(ii)] $S=S_B$ for some indefinite matrix $B \in \cS^k$. 
	\end{itemize} 
\end{prop}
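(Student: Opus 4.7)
The plan is to prove the two implications separately, using Proposition~\ref{faces} for one direction and a simple diagonalization plus interior-point argument for the other.

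For the implication (ii)~$\Rightarrow$~(i), I would begin with an indefinite $B$ and produce a \emph{positive definite} matrix $A$ with $\sprod{A}{B}=0$. After an orthogonal change of basis, we may write $B = U^\top \Diag(d_1,\ldots,d_k) U$ with $d_1 > 0$ and $d_2 < 0$. Now choose positive scalars $a_1,\ldots,a_k$ with $\sum_i d_i a_i = 0$; this is possible precisely because $d$ has entries of both signs (e.g.\ fix $a_3,\ldots,a_k > 0$ arbitrarily and solve for $a_1,a_2$). Then $A := U^\top \Diag(a_1,\ldots,a_k) U$ is positive definite and satisfies $\sprod{A}{B}=0$. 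Consequently $B^\perp$ meets the interior of $\cS_+^k$, so the convex set $S_B = \cS_+^k \cap B^\perp$ has a relative interior point in $B^\perp$, and therefore $\dim S_B = \dim B^\perp = \binom{k+1}{2}-1$, i.e.\ $S_B$ has codimension~$1$.

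For (i)~$\Rightarrow$~(ii), starting from a codimension-$1$ slice $S = \cS_+^k \cap L$, the linear span of $S$ has codimension $1$ in $\cS^k$ and therefore equals $B^\perp$ for some non-zero $B \in \cS^k$, determined uniquely up to a scalar. Since $S \subseteq L$, its span satisfies $B^\perp \subseteq L$, and so either $L = B^\perp$, which gives $S = S_B$, or $L = \cS^k$, which would make $S = \cS_+^k$ of codimension $0$; hence $S = S_B$. It remains to rule out $B$ being semidefinite. Replacing $B$ by $-B$ if necessary, assume $B \in \cS_+^k$ with $r := \rank(B) \ge 1$. Proposition~\ref{faces} then gives $\dim S_B = (k-r)(k-r+1)/2 \le (k-1)k/2$, which for $k \ge 2$ is strictly less than $\binom{k+1}{2}-1$. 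This contradicts codimension~$1$, so $B$ is indefinite.

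No substantial obstacle is expected: the argument reduces to the standard principle that a hyperplane in $\cS^k$ meets $\cS_+^k$ in a set of full dimension (i.e.\ codimension $1$) if and only if it meets the interior of $\cS_+^k$, combined with the face-dimension formula of Proposition~\ref{faces} to handle the semidefinite case.
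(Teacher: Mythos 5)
Your proof is correct, and its overall structure matches the paper's: two separate implications, with Proposition~\ref{faces} supplying the dimension count that rules out semidefinite $B$ in the direction (i)~$\Rightarrow$~(ii). The only real divergence is in (ii)~$\Rightarrow$~(i): the paper takes vectors $x,y$ with $x^\top Bx>0>y^\top By$ and uses the rank-one matrices $xx^\top, yy^\top$ to show the hyperplane $B^\perp$ has points of the full-dimensional cone $\cS_+^k$ strictly on both sides, whereas you diagonalize $B=U^\top\Diag(d_1,\dots,d_k)U$ and directly build a \emph{positive definite} $A=U^\top\Diag(a_1,\dots,a_k)U$ with $\sum_i a_id_i=0$, so that $B^\perp$ visibly meets the interior of $\cS_+^k$. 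Both arguments are valid; yours is arguably slightly more self-contained, since it hands you the interior point explicitly rather than leaving implicit the (easy) convexity step that a hyperplane separating two points of a full-dimensional convex cone must cut through its interior. Your extra care in (i)~$\Rightarrow$~(ii) --- identifying the span of $S$ with $B^\perp$ and checking $L=B^\perp$ rather than $L=\cS^k$ --- fills in a detail the paper states without justification, and the inequality $(k-r)(k-r+1)/2\le (k-1)k/2 < k(k+1)/2-1$ for $k\ge 2$ checks out.
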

\begin{proof}
	\emph{(i) $\Rightarrow$ (ii):} If $S$ is a slice of co-dimension one, we can represent it as  $S=S_B$ with $B \in \cS^k \setminus \{0\}$. If $B$ were (positive or negative) semidefinite, then, possibly interchanging the roles of $B$ and $-B$, we could assume that $B$ is positive semidefinite. Proposition~\ref{faces} would yield $\dim(S_B) < \dim(\cS_+^k) -1$, which is a contradiction. Thus, $B$ is indefinite.
	
	\emph{(ii) $\Rightarrow$ (ii):} If $B$ is indefinite, then there exist $x, y \in \R^n$ with $x^\top B x >0$ and $y^\top B y < 0$. This yields $x^\top B x = \sprod{A}{B} > 0$ for $A = x x^\top \in \cS_+^k$ and $y^\top B y = \sprod{A}{B} < 0$ for $A = y y^\top \in \cS_+^k$. Thus, the hyperplane $L=\setcond{A \in \cS^k}{\sprod{A}{B}=0}$ splits $\cS^k$ into two open half-spaces, both containing points of the full-dimensional cone $\cS_+^k$. This yields (ii).
\end{proof}

\begin{proof}[Proof of Theorem~\ref{main}]
	Let $S$ be a slice of $\cS_+^3$. One has $\dim(S) \le \dim(\cS_+^3) = 6$. We first derive a number assertions in cases that depend on $\dim(S)$. 
	
	\emph{Case~1: $\dim(S)=6$}. In this case $S= \cS_+^3$ is not socr by Fawzi's result \cite{Fawzi:2018}. 
	
	\emph{Case~2: $\dim(S)=5$.} By Proposition~\ref{obs}, $S=S_B$ holds for some indefinite $B \in \cS^3$. We can factorize $B$  as $B = M^\top D M$, where $M$ is a regular matrix and $D$ is a non-zero diagonal matrix with the diagonal entries belonging to $\{-1,0,1\}$. 
	
	Recall  that $\sprod{A}{B} =\tr(AB) = 0$ and that $\tr(XY)=\tr(YX)$ holds for all $X, Y \in \R^{n \times n}$. We thus have 
	\begin{align}
		\sprod{A}{B} & = \tr(AB) & & \text{(use $B=M^\top D M$)} \nonumber
		\\ & = \tr(A M^\top D M) & & \text{(move factor $M$ from right to left)} \nonumber
		\\ & = \tr(M A M^\top D) & & \text{(write as a scalar product with $D$)} \nonumber 
		\\ & = \nsizesprod{\underbrace{M A M^\top}_{=:F(A)}}{D}. \label{F(A)D}
	\end{align}
	The map $F : \cS^3 \to \cS^3$ is a linear bijection satisfying $F(\cS_+^3) = \cS_+^3$. Thus,
	\begin{align*}
		S  = S_B
		  & = \setcond{A \in \cS_+^3}{\sprod{A}{B}=0} & & \text{(use \eqref{F(A)D})}
		\\ & = \setcond{A \in \cS_+^3}{\sprod{F(A)}{D}=0} & & \text{(exchange $A$ with $F^{-1}(A)$)}
		\\ & = F^{-1} (\setcond{A \in \cS_+^3}{\sprod{A}{D} =0})
		\\ & = F^{-1} (S_D).
	\end{align*} 
 This implies that the slice of $S_B$ is linearly isomorphic to $S_D$ so that we can deal with $S_D$ rather than $S_B$. Since $B$ is indefinite, the matrix $D$ is indefinite, too. Hence, $D$ contains both $1$ and $-1$ on its diagonal. Thus, after possibly exchanging the roles of $D$ with $ - D$ (which does not affect $S_D$) and ordering diagonal entries, we arrive at the cases $D=\Diag(1,-1,0)$ and $D=\Diag(1,-1,-1)$.
	In the former case, $D$ is singular and the respective slice $S_D$ is socr, because $S_D=S_1$ with $S_1$ from Table~\ref{ex}, while in the latter case $D$ is non-singular and the respective slice $S_D$ is not socr, because $S_D=S_2$ with $S_2$ from Table~\ref{ex}. Since the map $F$ preserves singularity of matrices, we conclude that $S=S_B$ is socr if and only if the matrix $B$ is singular. 
	
	\emph{Case~3: $\dim(S) \le 4$.} 
	
	\emph{Subcase~3a: $S$ does not contain interior points of $\cS_+^3$}. Using the fact that $S$ and $\cS_+^k$ can be separated by a hyperplane, we conclude that $S$ is a slice of a face $F$ of $\cS_+^3$ with $F \varsubsetneq \cS_+^3$. Then, in view of Proposition~\ref{faces}, $S=S_B$ for some non-zero semidefinite matrix $B$. Since $S_B$ is linearly isomorphic to $\cS_+^{3-r}$ for $r:=\rank(B)>0$, we conclude that $S=S_B$ is socr.
	
	\emph{Subcase~3b: $S$ contains interior points of $\cS_+^3$.}  The linear hull of $S$, which we denote by $L$, has the same dimension as $S$ and we can represent $S$ as $S = \cS_+^3 \cap L$. 
	
	Consider the orthogonal complement 
	\[
		L^\perp := \setcond{B \in \cS^3}{\sprod{B}{A}=0 \ \text{for all} \ A \in L}
	\] of $L$, which is a space of dimension
	\begin{align*}
		\dim(L^\perp) = \dim(\cS^3) - \dim(L) = 6 - \dim(L) \ge 2. 
	\end{align*}	
	 For every $B \in L^\perp$, we have $S \subseteq S_B$.  Observe that every $B \in L^\perp \setminus \{0\}$ is indefinite, as otherwise $S \subseteq S_B$ with a non-zero semidefinite $B$ and Proposition~\ref{faces} would yield that $S$ does not contain interior points of $\cS_+^3$. Let us fix an arbitrary $C \in L^\perp \setminus \{0\}$. If $C$ is singular, then the assertion of Case~2 yields that $S_C$ is socr. But then, since $S$ is a linear slice of $S_C$, we conclude that $S$ is socr, too. 
	
	If $C$ is non-singular, we  consider a path $\Gamma$ in $L^\perp \setminus \{0\}$ that connects $C$ with $-C$. Such a path exists because $\dim(L^\perp) \ge 2$, which implies that the set $L^\perp \setminus \{0\}$ is connected. After possibly exchanging the roles of $C$ and $-C$, we assume that $C$ has two positive and one negative eigenvalue (counting multiplicities). Then $-C$ has two negative and one positive eigenvalue. 
	 
	 We claim that by letting a matrix $B$ move along the path $\Gamma$ from $C$ to $-C$, we will encounter a singular matrix $B$. We use the notation $\lambda_1(B) \le \lambda_2(B) \le \lambda_3(B)$ to denote the three eigenvalues of $B \in \cS^3$ listed in the increasing order and counting multiplicities. It is well known that the spectrum of a matrix $B$ of a given size is continuous in $B$. Within the space $\cS^3$ of $3 \times 3$ symmetric matrices,  this means that $\lambda_1,\lambda_2,\lambda_3 : \cS^3 \to \R$ are continuous functions. By the choice of $C$, we have $\lambda_2(C) > 0$ and $\lambda_2(-C) <0$. So, by the intermediate value theorem, $\lambda_2(B)$ attains the value zero at some point $B$ of the path $\Gamma$, which joins $C$ and $-C$.  For such a $B$,  the assertion of of Case~2 yields that $S_B$ is socr. Since $S$ is a slice of $S_B$, we conclude that $S$ is socr, too. 
	 
	 The assertion of the theorem can now be derived from  the assertions of the above cases. 
	 
	 To prove the necessity, we need to show that if a slice $S$ of $\cS_+^3$ is socr and $\dim(S)>4$, then $S =S_B$ for some $B \in \cS_+^3 \setminus \{0\}$ satisfying $\det(B)=0$. The assertion of Case~1 excludes  $\dim(S)=6$. So, $\dim(S)=5$ and the desired $B$ exists by the assertion of Case~2. 
	 
	 As for the sufficiency, if a slice $S$ has dimension at most $4$, then $S$ is socr by the assertion of Case~3. Assume now that $S=S_B$ for some $B \in \cS_+^3 \setminus \{0\}$ with $\det(B)=0$. The matrix $B$ is either indefinite or semidefinite. If $B$ is indefinite, $\dim(S)=5$, by Proposition~\ref{obs}, while the assertion of Case~2 tells us that $S=S_B$ is socr. If $B$ is semidefinite, say positive semidefinite, then in view of $B \ne 0$ and  Proposition~\ref{faces}, $\dim(S)=\dim(S_B) \le 3$ so that the assertion of Case~3 implies that $S$ is socr.
\end{proof}

\begin{proof}[Proof of Corollary~\ref{dim5}]
	This assertion is a direct consequence of Proposition~\ref{obs} and Theorem~\ref{main}. 
\end{proof}

\begin{proof}[Proof of Corollary~\ref{Q2}] We first prove (b) and then (a). 

	\emph{(b):} If $S$ is $5$-dimensional and socr, then by Corollary~\ref{dim5}, $S=S_B$ for some singular and indefinite $B \in \cS^3$. In the proof of Theorem~\ref{main}, we have seen that such $S_B$ is linearly isomorphic to $S_D$ with $D = \Diag(1,-1,0)$. Thus, (b) follows by applying \eqref{eq:spec:slice} to $S_D$. 
	
	 \emph{(a):} In the proof of Theorem~\ref{main} we have shown that every slice $S$ of $\cS_+^3$ of dimension at most $4$ is either a slice of a face $F$ of $\cS_+^3$ with $F \varsubsetneq \cS_+^3$ or a slice of some $5$-dimensional $S_B$ with an indefinite and singular $B \in \cS^3$. In the former case, the assertion follows from Proposition~\ref{faces}, while in the latter case the fact that $S_B$ is socr implies that $S$ is socr, too. 
\end{proof}

\begin{proof}[Proof of Corollary~\ref{spec3}]
	Consider an arbitrary spectrahedron
	\[
		\SP_{A,B} = \setcond{x \in \R^n}{A(x) +B \in \cS_+^3}
	\]
	given by an LMI $A(x) + B \in \cS_+^3$ of size $3$ with $n \le 3$. The spectrahedral cone
	\[
		C = \setcond{ (x,y) \in \R^n \times \R}{ A(x) + y B  \in \cS_+^3}
	\]
	is a "homogeneous version" of $\SP_{A,B}$.
	Modulo the lineality space, $C$ is isomoprhic to a slice of $\cS_+^3$. As $\dim(C) \le \dim (A(\R^n)) +1 \le n+1 \le 4$, the respective slice is of dimension at most $4$. So, by Theorem~\ref{main},  $C$ is socr. Using $\SP_{A,B}  = \setcond{x \in \R^n}{(x,1) \in C}$, we conclude that $\SP_{A,B}$ is affinely socr. 
\end{proof}

\paragraph*{Acknowledgments.} This research was inspired by a visit of the Convexity Day at Max-Planck-Institut f\"ur Mathematik in den Naturwissenschaften in Leipzig, organized by Rainer Sinn, Bernd Sturmfels and Thomas Wannerer on September 9, 2019. The overview talk of Daniel Plaumann in this meeting, referring to genericity results from \cite{ottem2015quartic}, has led me to considering generic lifting properties of $n$-dimensional slices of $\cS_+^k$. 

\bibliographystyle{amsalpha}
\bibliography{literature}

\providecommand{\bysame}{\leavevmode\hbox to3em{\hrulefill}\thinspace}
\providecommand{\MR}{\relax\ifhmode\unskip\space\fi MR }
\providecommand{\MRhref}[2]{%
  \href{http://www.ams.org/mathscinet-getitem?mr=#1}{#2}
}
\providecommand{\href}[2]{#2}
\begin{thebibliography}{ORSV15}

\bibitem[Ave19]{averkov2019optimal}
G.~Averkov, \emph{Optimal size of linear matrix inequalities in semidefinite
  approaches to polynomial optimization}, SIAM Journal on Applied Algebra and
  Geometry \textbf{3} (2019), no.~1, 128--151.

\bibitem[Bar02]{barvinok2002course}
A.~Barvinok, \emph{A {C}ourse in {C}onvexity}, vol.~54, American Mathematical
  Soc., 2002.

\bibitem[BPT12]{blekherman2012semidefinite}
G.~Blekherman, P.~A. Parrilo, and R.~R. Thomas, \emph{Semidefinite optimization
  and convex algebraic geometry}, SIAM, 2012.

\bibitem[BTN01]{MR1857264}
A.~Ben-Tal and A.~Nemirovski, \emph{Lectures on modern convex optimization},
  MPS/SIAM Series on Optimization, Society for Industrial and Applied
  Mathematics (SIAM), Philadelphia, PA; Mathematical Programming Society (MPS),
  Philadelphia, PA, 2001, Analysis, algorithms, and engineering applications.

\bibitem[Faw16]{Fawzi:arxiv:2016}
H.~Fawzi, \emph{On representing the positive semidefinite cone using the
  second-order cone}, arXiv preprint arXiv:1610.04901 (2016).

\bibitem[Faw19]{Fawzi:2018}
\bysame, \emph{On representing the positive semidefinite cone using the
  second-order cone}, Math. Program. \textbf{175} (2019), no.~1-2, Ser. A,
  109--118.

\bibitem[GPT13]{GPT:2013}
J.~Gouveia, P.~A. Parrilo, and R.~R. Thomas, \emph{Lifts of convex sets and
  cone factorizations}, Math. Oper. Res. \textbf{38} (2013), no.~2, 248--264.

\bibitem[GSP13]{GSP:talk:2013}
F.~Glineur, J.~Saunderson, and P.~Parrilo, \emph{Second-order cone
  representations of positive semidefinite cones}, 2013, Talk at the 4th SDP
  days, Amsterdam.

\bibitem[LP95]{laurent1995positive}
M.~Laurent and S.~Poljak, \emph{On a positive semidefinite relaxation of the
  cut polytope}, Linear Algebra and its Applications \textbf{223} (1995),
  439--461.

\bibitem[ORSV15]{ottem2015quartic}
J.~C. Ottem, K.~Ranestad, B.~Sturmfels, and C.~Vinzant, \emph{Quartic
  spectrahedra}, Mathematical programming \textbf{151} (2015), no.~2, 585--612.

\bibitem[Sau19]{saunderson2019limitations}
J.~Saunderson, \emph{Limitations on the expressive power of convex cones
  without long chains of faces}, arXiv preprint arXiv:1902.06401 (2019).

\end{thebibliography}

\end{document}